\newtheoremstyle{thmstyle}
  {6pt}
  {6pt}
  {\it}
  {}
  {\bf}
  {}
  {.5em}
  {}
\newtheoremstyle{remstyle}
  {6pt}
  {6pt}
  {\rm}
  {}
  {\bf}
  {}
  {.5em}
  {}
\theoremstyle{thmstyle}
\newtheorem{thm}{\indent Theorem}[section]
\newtheorem{prop}{\indent Proposition}[section]
\newtheorem{coro}{\indent Corollary}[section]
\newtheorem{defi}{\indent Definition}[section]
\theoremstyle{remstyle}
\newtheorem{rem}{\indent \bf Remark}[section]
\def\nd{\noindent}
\def\lu{\underline{\mu}}
\def\ou{\overline{\mu}}
\def\os{\overline{\sigma}}
\def\vp{\varphi}
\def\cp{\mathcal{P}}
\def\lu{\underline{\mu}}
\def\ou{\overline{\mu}}
\def\os{\overline{\sigma}}
\def\be{\hat{\mathbb{E}}}
\def\vp{\varphi}
\title{\Large \bf \boldmath\ \\  A note on the convergence rate of Peng's law of large numbers under sublinear expectations} 
\author{\large Mingshang Hu $^{1}$, Xiaojuan Li $^1$ and  Xinpeng Li $^{2\dagger}$} 
\date{}
\begin{document}

\maketitle

\renewcommand{\thefootnote}{\fnsymbol{footnote}}

\footnotetext{\hspace*{-5mm} \begin{tabular}{@{}r@{}p{13.4cm}@{}}
$^1$ & Zhangtai Securities Institute for Financial Studies, Shandong University,
Jinan 250100, China.\\
$^{2}$ & Research Center for Mathematics and Interdisciplinary Sciences, Shandong University,
Qingdao 266237, China.\\
$^{\dagger}$ & Corresponding author\ \ {E-mail:lixinpeng@sdu.edu.cn} \\
 $^{\ast}$ & Project supported by
  National Key R\&D Program of
China (No.2018YFA0703900) and NSF (No.11601281, No.11671231).
\end{tabular}}

\renewcommand{\thefootnote}{\arabic{footnote}}

\begin{abstract} 
This short note provides a new and simple proof of the convergence rate for Peng's law of large numbers under sublinear expectations,
which improves the corresponding results in Song \cite{song1} and Fang et al. \cite{FPSS}.
\vskip 4.5mm

\nd \begin{tabular}{@{}l@{ }p{10.1cm}} {\bf Keywords } &
Law of large numbers, rate of convergence, sublinear expectation
\end{tabular}

\nd {\bf AMS Subject  Classification } 
60F05

\end{abstract}

\baselineskip 14pt

\setlength{\parindent}{1.5em}

\setcounter{section}{0}

\section{Introduction} \label{section1}

The first law of large numbers (LLN for short) on sublinear expectation space was proved by Peng in 2007 for uncorrelated random variables on arXiv (math.PR/0702358v1), see also Peng \cite{P2019b}. The notions of independence and identical distribution (i.i.d. for short) are initiated by \cite{P2019b}, and the more general form of LLN for i.i.d. sequence is proved by Peng \cite{P2019}:

Let $\{X_i\}_{i=1}^\infty$ be an i.i.d. sequence on sublinear expectation space $(\Omega,\mathcal{H},\be)$ with $\ou=\be[X_1]$ and $\lu=-\be[-X_1]$, we further assume that
$\lim_{\lambda\rightarrow+\infty}\be[(|X_1|-\lambda)^+]=0,$ then
\begin{equation}\label{eq20}
\lim_{n\rightarrow\infty}\be\left[\vp\left(\frac{X_1+\cdots+X_n}{n}\right)\right]=\max_{\lu\leq r\leq\ou}\vp(r),
\end{equation}
where $\vp$ is continuous function satisfying linear growth condition.

Song \cite{song1} gives the following error estimation for Peng's LLN via Stein's method:
\begin{equation}\label{s1}
\sup_{|\vp|_{Lip}\leq 1}\left|\be\left[\vp\left(\frac{S_n}{n}\right)\right]-\max_{\lu\leq r\leq\ou}\vp(r)\right|\leq \frac{C}{\sqrt{n}},
\end{equation}
where $|\vp|_{Lip}\leq 1$ means that the Lipschitz constant of $\vp$ is not exceed 1 and $C$ is a constant depending only on $\be[X_1^2]$. The corresponding proof in \cite{song1} is based on the smooth approximations of nonlinear partial differential equation.

In this short note, we will provide a simple and purely probabilistic proof of (\ref{s1}). One basic tool in our proof is Chatterji's inequality in the classical probability theory (see Chatterji \cite{cha}), which says $E[|X_1+\cdots+X_n|^p]\leq 2^{2-p}\sum_{j=1}^nE[|X_j|^p]$ for martingale-difference sequence with $\max_{1\leq j\leq n}E[|X_j|^p]<\infty$, where $p\in [1,2]$. In particular, we show that the constant $C$ in (\ref{s1}) can be chosen as the upper standard deviations of $X_1$, i.e.,  $C=\os(X_1):=\inf_{\mu\in[\lu,\ou]}\be[|X_1-\mu|^2]^{\frac{1}{2}}$.

The remainder of this paper is organized as follows. Section 2 describes some basic concepts and notations of the sublinear expectation theory. The main results of this note with the simple proof are provided in Section 3.

\section{Preliminaries}

We recall some basic notions and results in the theory of sublinear
expectations. The readers may refer to \cite{HP21,P07a,P08a,pengsur,P2019} for
more details.

Let $\Omega$ be a given set and let $\mathcal{H}$ be a linear space of real
functions defined on $\Omega$ such that $c\in \mathcal{H}$ for all constants
$c$ and $|X|\in \mathcal{H}$ if $X\in \mathcal{H}$. We further suppose that if
$X_{1},\ldots,X_{n}\in \mathcal{H}$, then $\varphi(X_{1},\cdots,X_{n}%
)\in \mathcal{H}$ for each $\varphi \in C_{b.Lip}(\mathbb{R}^{n})$, where
$C_{b.Lip}(\mathbb{R}^{n})$ denotes the space of bounded and Lipschitz
functions on $\mathbb{R}^{n}$. $\mathcal{H}$ is considered as the space of
random variables. $X=(X_{1},\ldots,X_{n})$, $X_{i}\in \mathcal{H}$, is called
a $n$-dimensional random vector.

\begin{defi}
A sublinear expectation $\hat{\mathbb{E}}$ on $\mathcal{H}$ is a functional $\hat{\mathbb{{E}}}:\mathcal{H}\rightarrow \mathbb{R}$ satisfying the following
properties: for all $X,Y\in \mathcal{H}$, we have
\begin{description}
\item[(a)] Monotonicity: $\mathbb{\hat{E}}[X]\geq \mathbb{\hat{E}}[Y]$ if
$X\geq Y$.

\item[(b)] Constant preserving: $\mathbb{\hat{E}}[c]=c$ for $c\in \mathbb{R}$.

\item[(c)] Sub-additivity: $\mathbb{\hat{E}}[X+Y]\leq \mathbb{\hat{E}%
}[X]+\mathbb{\hat{E}}[Y]$.

\item[(d)] Positive homogeneity: $\mathbb{\hat{E}}[\lambda X]=\lambda
\mathbb{\hat{E}}[X]$ for $\lambda \geq0$.
\end{description}

The triple $(\Omega,\mathcal{H},\mathbb{\hat{E}})$ is called a sublinear
expectation space.
\end{defi}

Denote by $\mathcal{\hat{H}}$ the completion of $\mathcal{H}$ under the norm
$||X||:=\mathbb{\hat{E}}[|X|]$. Noting that $|\mathbb{\hat{E}}[X]-\mathbb{\hat
{E}}[Y]|\leq \mathbb{\hat{E}}[|X-Y|]$, $\mathbb{\hat{E}}[\cdot]$ can be
continuously extended to $\mathcal{\hat{H}}$. One can check that
$(\Omega,\mathcal{\hat{H}},\mathbb{\hat{E}})$ is still a sublinear expectation
space, which is called a complete sublinear expectation space. In the
following, we always suppose that $(\Omega,\mathcal{H},\mathbb{\hat{E}})$ is
complete.

\begin{defi}
Let $X$ and $Y$ be two random variables on $(\Omega,\mathcal{H},\mathbb{\hat
{E}})$. $X$ and $Y$ are called identically distributed, denoted by
$X\overset{d}{=}Y$, if for each $\varphi \in C_{b.Lip}(\mathbb{R})$,
\[
\mathbb{\hat{E}}[\varphi(X)]=\mathbb{\hat{E}}[\varphi(Y)].
\]

\end{defi}

\begin{defi}
\label{new-de1}Let $\left \{  X_{n}\right \}  _{n=1}^{\infty}$ be a sequence of
random variables on $(\Omega,\mathcal{H},\mathbb{\hat{E}})$. $X_{n}$ is said
to be independent of $\left(  X_{1},\ldots,X_{n-1}\right)  $ under
$\mathbb{\hat{E}}$, if for each $\varphi \in C_{b.Lip}(\mathbb{R}^{n})$
\[
\mathbb{\hat{E}}\left[  \varphi \left(  X_{1},\ldots,X_{n}\right)  \right]
=\mathbb{\hat{E}}\left[  \left.  \mathbb{\hat{E}}\left[  \varphi \left(
x_{1},\ldots,x_{n-1},X_{n}\right)  \right]  \right \vert _{\left(  x_{1}%
,\ldots,x_{n-1}\right)  =\left(  X_{1},\ldots,X_{n-1}\right)  }\right]  .
\]
The sequence of random variables $\left \{  X_{n}\right \}  _{n=1}^{\infty}$ is
said to be independent, if $X_{n+1}$ is independent of $\left(  X_{1}%
,\ldots,X_{n}\right)  $ for each $n\geq1$.

\end{defi}

The following representation theorem is useful in the theory of sublinear expectations.

\begin{thm}
\label{new-th1}Let $X=(X_{1},\ldots,X_{n})$ be a $n$-dimensional random
vector on $(\Omega,\mathcal{H},\mathbb{\hat{E}})$. Set%
\begin{equation}
\mathcal{P}=\{P:P\text{ is a probability measure on }(\mathbb{R}%
^{n},\mathcal{B}(\mathbb{R}^{n}))\text{, }E_{P}[\varphi]\leq \mathbb{\hat{E}%
}\left[  \varphi(X)\right]  \text{ for }\varphi \in C_{b.Lip}(\mathbb{R}%
^{n})\}. \label{e1}%
\end{equation}
Then $\mathcal{P}$ is weakly compact, and for each $\varphi \in C_{b.Lip}(\mathbb{R}^{n})$,
\begin{equation}
\mathbb{\hat{E}}\left[  \varphi
(X)\right]=\max_{P\in \mathcal{P}}E_{P}[\varphi].
\label{e3}%
\end{equation}
Moreover, if $\max_{1\leq i\leq n}\mathbb{\hat{E}}\left[  |X_{i}|^{r}\right]  <\infty$ for some $r>1$, then for each $\varphi \in C_{Lip}(\mathbb{R}^{n})$,
\begin{equation}
\mathbb{\hat{E}}\left[  \varphi(X)\right]=\max_{P\in \mathcal{P}}E_{P}[\varphi], \label{e2}%
\end{equation}
where $C_{Lip}(\mathbb{R}^{n})$ denotes the space of Lipschitz functions on
$\mathbb{R}^{n}$.
\end{thm}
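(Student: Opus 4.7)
The plan is to represent $\tilde{\mathbb{E}}(\varphi) := \hat{\mathbb{E}}[\varphi(X)]$ as the upper envelope of a family of integrals against Borel probability measures, via a Hahn--Banach / Daniell--Stone strategy. First, sublinearity, monotonicity, and constant preservation of $\hat{\mathbb{E}}$ transfer immediately, so $\tilde{\mathbb{E}}$ is a sublinear functional on the lattice $C_{b.Lip}(\mathbb{R}^n)$. For any fixed $\varphi_0 \in C_{b.Lip}(\mathbb{R}^n)$, the Hahn--Banach theorem supplies a linear functional $L$ on $C_{b.Lip}(\mathbb{R}^n)$ with $L \leq \tilde{\mathbb{E}}$ everywhere and $L(\varphi_0) = \tilde{\mathbb{E}}(\varphi_0)$; applying $L \leq \tilde{\mathbb{E}}$ and $-L \leq \tilde{\mathbb{E}}(-\cdot\,)$ to constants gives $L(1) = 1$, and applying it to $-\varphi \leq 0$ shows $L$ is positive.

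The main obstacle is to upgrade each such $L$ to a Borel probability measure, which requires continuity from above: $L(\varphi_k) \downarrow 0$ whenever $\varphi_k \in C_{b.Lip}(\mathbb{R}^n)$ with $\varphi_k \downarrow 0$ pointwise. I would handle the compact part by Dini's theorem, which forces uniform convergence on each $[-R,R]^n$, and the tails by choosing Lipschitz cutoffs $\eta_R \in C_{b.Lip}$ with $0 \leq \eta_R \leq 1$, vanishing on $[-R,R]^n$, and satisfying $L(\eta_R) \leq \tilde{\mathbb{E}}(\eta_R)$. The completeness of $(\Omega,\mathcal{H},\hat{\mathbb{E}})$ gives $\tilde{\mathbb{E}}(\eta_R) \downarrow 0$. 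With Daniell--Stone we obtain $L(\varphi) = \int \varphi\, dP$ for a unique Borel probability measure $P \in \mathcal{P}$. Since $\varphi_0$ was arbitrary and $L$ attains $\tilde{\mathbb{E}}(\varphi_0)$, the identity (\ref{e3}) follows with the supremum attained. For weak compactness of $\mathcal{P}$: the uniform tightness $\sup_{P \in \mathcal{P}} P(|x| > R) \leq \tilde{\mathbb{E}}(\eta_R) \to 0$ combined with weak closedness (since $E_P[\varphi] \leq \tilde{\mathbb{E}}(\varphi)$ is preserved under weak limits for $\varphi \in C_{b.Lip}$) yields the claim by Prokhorov.

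For the extension (\ref{e2}) under the moment condition, given $\varphi \in C_{Lip}(\mathbb{R}^n)$ with linear growth, I would truncate via $\varphi_N := \varphi \cdot \chi_N$ where $\chi_N \in C_{b.Lip}$ equals $1$ on $|x| \leq N$ and vanishes outside $|x| \leq N+1$. The linear growth of $\varphi$ and the bound $\hat{\mathbb{E}}[|X_i|^r] < \infty$ for some $r > 1$ give, via H\"older's inequality, $\hat{\mathbb{E}}[|\varphi(X) - \varphi_N(X)|] \to 0$ as $N \to \infty$; the same uniform-in-$P$ bound holds for $E_P[|\varphi - \varphi_N|]$ because the moment dominates: $E_P[|x_i|^r] \leq \hat{\mathbb{E}}[|X_i|^r]$ by the defining inequality of $\mathcal{P}$ (applied after truncating $|x|^r$ by Lipschitz approximations). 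Passing to the limit in $\tilde{\mathbb{E}}(\varphi_N) = \max_{P \in \mathcal{P}} E_P[\varphi_N]$ then yields (\ref{e2}), where attainment of the maximum uses the weak compactness from the previous step.
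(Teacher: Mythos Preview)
The paper does not prove this theorem; it simply quotes it from Hu--Li \cite{HL} (with \cite{DHP11,HP09}) for the representation \eqref{e3} and refers to Lemma~2.4.12 of Peng \cite{P2019} for the extension \eqref{e2}. Your Hahn--Banach/Daniell--Stone argument for \eqref{e3} and the truncation argument for \eqref{e2} are precisely the standard route taken in those references, so your proposal is correct and aligned with what the paper is invoking.

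One small correction: the tail decay $\tilde{\mathbb{E}}(\eta_R)\downarrow 0$ is not a consequence of completeness of $(\Omega,\mathcal{H},\hat{\mathbb{E}})$. Completeness concerns Cauchy sequences in the norm $\|\cdot\|=\hat{\mathbb{E}}[|\cdot|]$ and says nothing about tightness. The correct reason is simply that $X_i\in\mathcal{H}$ and $\hat{\mathbb{E}}:\mathcal{H}\to\mathbb{R}$, so $\hat{\mathbb{E}}[|X_i|]<\infty$; choosing $\eta_R$ with $0\le\eta_R(x)\le R^{-1}\sum_{i=1}^n|x_i|$ then gives
\[
\tilde{\mathbb{E}}(\eta_R)=\hat{\mathbb{E}}[\eta_R(X)]\le \frac{1}{R}\sum_{i=1}^n\hat{\mathbb{E}}[|X_i|]\longrightarrow 0.
\]
With this fix the Dini-plus-tail argument goes through, and the rest of your outline (Prokhorov for weak compactness, truncation $\varphi_N=\varphi\chi_N$ combined with the uniform moment bound $\sup_{P\in\mathcal{P}}E_P[|x_i|^r]\le\hat{\mathbb{E}}[|X_i|^r]$ for the passage to $C_{Lip}$) is correct.
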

The representation (\ref{e3}) is obtained by Theorem 10 in Hu and Li
\cite{HL} (see also \cite{DHP11,HP09}). Similar to Lemma 2.4.12 in Peng \cite{P2019}, it can be extend to (\ref{e2}) with higher moment comment condition.

For each given positive integer $n$, consider measurable space $(\mathbb{R}%
^{n},\mathcal{B}(\mathbb{R}^{n}))$ and define%
\begin{equation}%
\begin{array}
[c]{l}%
\tilde{X}_{i}(x)=x_{i}\text{ for }x=(x_{1},\ldots,x_{n})\in \mathbb{R}%
^{n},\text{ }1\leq i\leq n,\\
\mathcal{F}_{i}=\sigma(\tilde{X}_{1},\ldots,\tilde{X}_{i})=\{A\times
\mathbb{R}^{n-i}:\forall A\in \mathcal{B}(\mathbb{R}^{i})\},\text{ }1\leq i\leq
n,\ \mathcal{F}_{0}=\{ \emptyset,$ $\mathbb{R}^{n}\}
\end{array}
\label{e4}%
\end{equation}

The following proposition was initiated by Li \cite{Li} (see also \cite{LG,L-Z}).

\begin{prop}
\label{pr2}Let $X=(X_{1},\ldots,X_{n})$ be a $n$-dimensional random vector on
$(\Omega,\mathcal{H},\mathbb{\hat{E}})$. $\mathcal{P}$, $\tilde{X}_{i}$ and $\mathcal{F}_{i}$, $1\leq i\leq n$, are
defined in (\ref{e1}) and (\ref{e4}) respectively. If $X_{j+1}$ is independent of $\left(  X_{1}%
,\ldots,X_{j}\right)  $ for some $j\geq1$ and $\mathbb{\hat{E}}\left[
|X_{j+1}|^{1+\alpha}\right]  <\infty$ for some $\alpha>0$, then, for each $P\in \mathcal{P}$, we have%
\begin{equation}\label{ee1}
-\mathbb{\hat{E}}\left[  -X_{j+1}\right]  \leq E_{P}[\tilde{X}_{j+1}%
|\mathcal{F}_{j}]\leq \mathbb{\hat{E}}\left[  X_{j+1}\right]  ,\text{
}P-\text{a.s.}%
\end{equation}

\end{prop}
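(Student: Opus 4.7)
Fix $P\in\mathcal{P}$ and write $\overline{\mu}:=\hat{\mathbb{E}}[X_{j+1}]$. The lower bound follows from the upper bound by replacing $X_{j+1}$ with $-X_{j+1}$, so it suffices to prove $E_P[\tilde{X}_{j+1}\mid\mathcal{F}_j]\leq\overline{\mu}$, $P$-a.s. My strategy is first to establish the one-sided estimate
$$E_P\bigl[\psi(\tilde{X}_1,\ldots,\tilde{X}_j)(\tilde{X}_{j+1}-\overline{\mu})\bigr]\leq 0 \qquad (\ast)$$
for every nonnegative $\psi\in C_{b.Lip}(\mathbb{R}^j)$, and then to upgrade $(\ast)$ to $\psi=\mathbf{1}_B$ for arbitrary $B\in\mathcal{B}(\mathbb{R}^j)$, from which the conclusion is immediate by the definition of conditional expectation.

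For $(\ast)$, I would truncate in the last coordinate: set $\chi_N(t):=(-N)\vee(t-\overline{\mu})\wedge N$ and $\varphi_N(x_1,\ldots,x_{j+1}):=\psi(x_1,\ldots,x_j)\chi_N(x_{j+1})$, so that $\varphi_N\in C_{b.Lip}(\mathbb{R}^{j+1})$. Since $\psi\geq 0$, Definition \ref{new-de1} of independence together with positive homogeneity of $\hat{\mathbb{E}}$ yields
$$\hat{\mathbb{E}}[\varphi_N(X_1,\ldots,X_{j+1})]=\hat{\mathbb{E}}\bigl[c_N\,\psi(X_1,\ldots,X_j)\bigr],\qquad c_N:=\hat{\mathbb{E}}[\chi_N(X_{j+1})],$$
and hence $|\hat{\mathbb{E}}[\varphi_N(X)]|\leq|c_N|\,\hat{\mathbb{E}}[\psi(X_1,\ldots,X_j)]$. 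The moment hypothesis forces $c_N\to 0$ (see the last paragraph), so $\hat{\mathbb{E}}[\varphi_N(X)]\to 0$. On the other side, applying Theorem \ref{new-th1} to $C_{b.Lip}$ approximations of $|x_{j+1}|^{1+\alpha}$ gives $E_P[|\tilde{X}_{j+1}|^{1+\alpha}]\leq\hat{\mathbb{E}}[|X_{j+1}|^{1+\alpha}]<\infty$, and then classical uniform integrability delivers $E_P[\varphi_N(\tilde{X})]\to E_P[\psi(\tilde{X}_1,\ldots,\tilde{X}_j)(\tilde{X}_{j+1}-\overline{\mu})]$. Passing to the limit in the defining inequality $E_P[\varphi_N(\tilde{X})]\leq\hat{\mathbb{E}}[\varphi_N(X)]$ from (\ref{e1}) yields $(\ast)$.

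With $(\ast)$ in hand, for any $B\in\mathcal{B}(\mathbb{R}^j)$ I approximate $\mathbf{1}_B$ in $L^1$ of the pushforward $P\circ(\tilde{X}_1,\ldots,\tilde{X}_j)^{-1}$ by nonnegative $\psi_k\in C_{b.Lip}(\mathbb{R}^j)$ with $\psi_k\leq 1$, using the standard regularity of Borel probability measures on $\mathbb{R}^j$. Dominated convergence with the $L^1(P)$-majorant $|\tilde{X}_{j+1}-\overline{\mu}|$ then transfers $(\ast)$ to $E_P[\mathbf{1}_A(\tilde{X}_{j+1}-\overline{\mu})]\leq 0$ for every $A=B\times\mathbb{R}^{n-j}\in\mathcal{F}_j$, which is exactly $E_P[\tilde{X}_{j+1}\mid\mathcal{F}_j]\leq\overline{\mu}$, $P$-a.s.

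The main obstacle will be the nonlinear limit $c_N\to 0$, since $\hat{\mathbb{E}}$ is not linear and one cannot appeal to dominated convergence directly. The cleanest way is the estimate
$$\bigl|\hat{\mathbb{E}}[\chi_N(X_{j+1})]-\hat{\mathbb{E}}[X_{j+1}-\overline{\mu}]\bigr|\leq\hat{\mathbb{E}}\bigl[(|X_{j+1}-\overline{\mu}|-N)^{+}\bigr],$$
whose right-hand side vanishes, at the rate $O(N^{-\alpha})$, by the standard consequence $\hat{\mathbb{E}}[(|X_{j+1}|-\lambda)^{+}]\leq\lambda^{-\alpha}\hat{\mathbb{E}}[|X_{j+1}|^{1+\alpha}]$ of the $(1+\alpha)$-moment condition; everything else is routine.
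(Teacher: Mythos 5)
Your proposal is correct and follows essentially the same route as the paper: the same truncation $\chi_N(t)=(-N)\vee(t-\overline{\mu})\wedge N$, the same factorization of $\hat{\mathbb{E}}[\psi\cdot\chi_N]$ via independence and positive homogeneity, the same $O(N^{-\alpha})$ control of the truncation error from the $(1+\alpha)$-moment condition, and the same approximation of $\mathcal{F}_j$-indicators by bounded Lipschitz functions (the paper phrases this last step as a proof by contradiction on the bad set $B=\{E_P[\tilde{X}_{j+1}|\mathcal{F}_j]>\overline{\mu}\}$ using a compact $F\subset B$, whereas you prove the test-function inequality for all nonnegative $\psi$ first and then upgrade, but this is only an organizational difference).
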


\begin{proof}

Set
$B=\{E_{P}[\tilde{X}_{j+1}|\mathcal{F}_{j}]>\mathbb{\hat{E}}\left[
X_{j+1}\right]  \} \in \mathcal{F}_{j}.$
If $P(B)>0$, then we can find a compact set $F\subset \mathbb{R}^{j}$ such that
$F\times \mathbb{R}^{n-j}\subset B$ and $P(F\times \mathbb{R}^{n-j})>0$. By
Tietze's extension theorem, there exists a sequence $\{ \varphi_{k}%
\}_{k=1}^{\infty}\subset C_{b.Lip}(\mathbb{R}^{j})$ such that $0\leq
\varphi_{k}\leq1$ and $\varphi_{k}(\tilde{X}_{1},\ldots,\tilde{X}%
_{j})\downarrow I_{F}(\tilde{X}_{1},\ldots,\tilde{X}_{j})$. Let $f(x_{j+1})=[(x_{j+1}%
-\mathbb{\hat{E}}\left[  X_{j+1}\right]  )\wedge N]\vee(-N)$, then
\[
\phi_{k,N}(x_{1},\ldots,x_{n})=\varphi_{k}(x_{1},\ldots,x_{j})f(x_{j+1}) \in
C_{b.Lip}(\mathbb{R}^{n}),
\]
for each $N\geq1$, we get%
\begin{align*}
E_{P}[\phi_{k,N}(\tilde{X}_{1},\ldots,\tilde{X}_{n})]  &  \leq \mathbb{\hat{E}%
}\left[  \phi_{k,N}(X_{1},\ldots,X_{n})\right]=\mathbb{\hat{E}}\left[ \varphi_{k}(X_{1},\ldots,X_{j})\mathbb{\hat{E}}\left[ f(X_{j+1}) \right]\right]\leq\be[f(X_{j+1})]\\
&
\leq\left \vert \mathbb{\hat{E}}\left[ f(X_{j+1})\right]  -\mathbb{\hat{E}}\left[
X_{j+1}-\mathbb{\hat{E}}\left[  X_{j+1}\right]  \right]  \right \vert\leq \frac{1}{N^\alpha}\mathbb{\hat{E}}\left[  |X_{j+1}-\mathbb{\hat{E}}\left[
X_{j+1}\right]  |^{1+\alpha}\right].
\end{align*}
Letting $N\rightarrow \infty$ first and then $k\rightarrow \infty$, we obtain%
\[
E_{P}[I_{F}(\tilde{X}_{1},\ldots,\tilde{X}_{j})(\tilde{X}_{j+1}-\mathbb{\hat
{E}}\left[  X_{j+1}\right]  )]\leq0,
\]
which contradicts to
\begin{align*}
E_{P}[I_{F}(\tilde{X}_{1},\ldots,\tilde{X}_{j})(\tilde{X}_{j+1}%
-\mathbb{\hat{E}}\left[  X_{j+1}\right]  )]&  =E_{P}[I_{F}(\tilde{X}_{1},\ldots,\tilde{X}_{j})(E_{P}[\tilde{X}%
_{j+1}|\mathcal{F}_{j}]-\mathbb{\hat{E}}\left[  X_{j+1}\right]  )]>0,
\end{align*}
since $F\times \mathbb{R}^{n-j}\subset B$ and $P(F\times \mathbb{R}%
^{n-j})>0$.

Thus $P(B)=0$, the right hand of (\ref{ee1}) holds,  so does the left hand if we consider $-X_{j+1}$.
\end{proof}

\section{Main result}

Now we give the following convergence rate of Peng's LLN.

\begin{thm}
\label{new-th3}Let $\{X_{i}\}_{i=1}^{\infty}$ be the independent random
variables on sublinear expectation space $(\Omega,\mathcal{H},\mathbb{\hat{E}%
})$ with $\mathbb{\hat{E}}[X_{i}]=\bar{\mu}$ and $-\mathbb{\hat{E}}%
[-X_{i}]=\underline{\mu}$ for $i\geq1$. Let $S_{n}=X_{1}+\cdots+X_{n}$. We further assume that there exists $\alpha\in(0,1]$ such that
\[
C_\alpha=\sup_{i\geq1}\left(
\mathbb{\hat{E}}[|X_{i}|^{1+\alpha}]\right)<\infty.
\]
Then, for each $\varphi \in C_{Lip}(\mathbb{R})$ with Lipschitz constant $L_\vp$,
we have
\[
\left \vert \mathbb{\hat{E}}\left[  \varphi \left(  \frac{S_{n}}{n}\right)
\right]  -\max_{r\in \lbrack \underline{\mu},\bar{\mu}]}\varphi(r)\right \vert
\leq L_\vp\left(\frac{4C_\alpha}{n^\alpha}\right)^{\frac{1}{1+\alpha}}.
\]

\end{thm}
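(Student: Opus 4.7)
The plan is to reduce the inequality to a classical martingale estimate under an arbitrary representing measure $P$ and exploit the fact that, by Proposition~\ref{pr2}, every $P$-conditional mean of the coordinate variables lies in $[\underline{\mu},\bar{\mu}]$; the decay rate then comes from Chatterji's inequality applied to the resulting martingale differences. Both the upper and lower halves of the absolute value will be handled by essentially the same $L^{1+\alpha}$ calculation, only the choice of representing measure differs.

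\textbf{Upper bound.} Fix $P\in\mathcal{P}$ from (\ref{e1}). With the notation $\tilde{X}_j,\mathcal{F}_j$ of (\ref{e4}), set $\mu_j:=E_P[\tilde{X}_j\mid\mathcal{F}_{j-1}]$ and $M_j:=\tilde{X}_j-\mu_j$, so that $\{M_j\}_{j=1}^n$ is a $P$-martingale difference sequence and, by Proposition~\ref{pr2}, $\mu_j\in[\underline{\mu},\bar{\mu}]$ $P$-a.s. Hence $\bar{\mu}_n:=\tfrac{1}{n}\sum_{j=1}^n\mu_j\in[\underline{\mu},\bar{\mu}]$, and the Lipschitz assumption yields
\[\varphi\!\left(\tfrac{S_n}{n}\right)\le \varphi(\bar{\mu}_n)+\tfrac{L_\varphi}{n}\Big|\sum_{j=1}^n M_j\Big|\le \max_{r\in[\underline{\mu},\bar{\mu}]}\varphi(r)+\tfrac{L_\varphi}{n}\Big|\sum_{j=1}^n M_j\Big|.\]
Using the elementary inequality $|a-b|^{1+\alpha}\le 2^\alpha(|a|^{1+\alpha}+|b|^{1+\alpha})$ and conditional Jensen, one gets $E_P[|M_j|^{1+\alpha}]\le 2^{\alpha+1}E_P[|\tilde{X}_j|^{1+\alpha}]\le 2^{\alpha+1}C_\alpha$. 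Chatterji's inequality with $p=1+\alpha\in(1,2]$ then gives $E_P\big[|\sum_{j=1}^n M_j|^{1+\alpha}\big]\le 2^{1-\alpha}\cdot n\cdot 2^{\alpha+1}C_\alpha=4nC_\alpha$, and Jensen applied to $x\mapsto x^{1+\alpha}$ yields $E_P[|\sum_j M_j|]\le (4nC_\alpha)^{1/(1+\alpha)}$. Dividing by $n$ produces the advertised rate $(4C_\alpha/n^\alpha)^{1/(1+\alpha)}$, and taking the maximum over $P\in\mathcal{P}$ via Theorem~\ref{new-th1} closes the upper estimate.

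\textbf{Lower bound.} Pick $r^*\in[\underline{\mu},\bar{\mu}]$ with $\varphi(r^*)=\max_r\varphi(r)$. For each $i$, convexity of the one-dimensional representing set of $X_i$ together with the identities $\hat{\mathbb{E}}[X_i]=\bar{\mu}$, $-\hat{\mathbb{E}}[-X_i]=\underline{\mu}$ supplies a probability measure $P_i^*$ with $E_{P_i^*}[X_i]=r^*$ and $E_{P_i^*}[|X_i|^{1+\alpha}]\le C_\alpha$. A short induction using Definition~\ref{new-de1} shows that the product $P^*:=\bigotimes_{i=1}^n P_i^*$ belongs to the joint representing set $\mathcal{P}$, so
\[\hat{\mathbb{E}}\!\left[\varphi\!\left(\tfrac{S_n}{n}\right)\right]\ge E_{P^*}\!\left[\varphi\!\left(\tfrac{S_n}{n}\right)\right]\ge \varphi(r^*)-L_\varphi\, E_{P^*}\!\left[\Big|\tfrac{S_n}{n}-r^*\Big|\right],\]
and the same Chatterji-plus-Jensen computation applied to the $P^*$-independent centred sequence $\{X_i-r^*\}$ (whose $(1+\alpha)$-moment is at most $2^{\alpha+1}C_\alpha$) yields the matching rate.

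I expect the main obstacle to be the construction of $P^*$: Proposition~\ref{pr2} only provides one-sided information about conditional means, so one must separately verify that a product of marginal representing measures lies inside the joint set $\mathcal{P}$. This is precisely the point at which the sublinear-independence hypothesis of Definition~\ref{new-de1} really enters, via the iterated formula $\hat{\mathbb{E}}[\varphi(X_1,\dots,X_n)]=\hat{\mathbb{E}}[\hat{\mathbb{E}}[\varphi(x_1,\dots,x_{n-1},X_n)]|_{(x_1,\dots,x_{n-1})=(X_1,\dots,X_{n-1})}]$; once this point is settled, the rest is a routine $L^{1+\alpha}$ estimate.
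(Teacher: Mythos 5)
Your proposal is correct and follows essentially the same route as the paper's proof: the upper bound comes from Proposition \ref{pr2} together with Chatterji's inequality applied to the martingale differences $\tilde{X}_j-E_P[\tilde{X}_j\mid\mathcal{F}_{j-1}]$ under an arbitrary $P\in\mathcal{P}$, and the lower bound from the product of convex combinations of marginal representing measures achieving mean $\mu^*$, exactly as in the paper. The only cosmetic difference is that you pass to the $L^1$ norm via Jensen before dividing by $n$, whereas the paper works directly with the $L^{1+\alpha}$ norm; the constants and the rate are identical.
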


\begin{proof}
For each fixed $n\geq1$, we use the notations $\mathcal{P}$, $\tilde{X}_{i}$
and $\mathcal{F}_{i}$ as in (\ref{e1}) and (\ref{e4}).


For each given $P\in \mathcal{P}$, set $\tilde{S}_{n}=\sum_{i=1}^{n}\tilde{X}_{i}${ and }$\tilde{S}_{n}^{P}%
=\sum_{i=1}^{n}E_{P}[\tilde{X}_{i}|\mathcal{F}_{i-1}]$. By Proposition
\ref{pr2}, we know $
\underline{\mu}\leq \frac{\tilde{S}_{n}^{P}}{n}\leq \bar{\mu}%
,$  $P$-a.s., which implies that
\begin{align*}
E_{P}\left[\varphi\left(\frac{\tilde{S}_{n}}{n}\right)\right]-\max_{r\in \lbrack \underline{\mu}%
,\bar{\mu}]}\varphi(r)  &  \leq E_{P}\left[\varphi\left(\frac{\tilde{S}_{n}}{n}%
\right)-\varphi\left(\frac{\tilde{S}_{n}^{P}}{n}\right)\right]\leq \frac{L_\vp}{{n}}\left(E_{P}\left[|\tilde{S}_{n}-{\tilde{S}_{n}^{P}}|^{1+\alpha}\right]\right)^{\frac{1}{1+\alpha}}
\end{align*}
Since $\{\tilde{X}_i-E_{P}[\tilde{X}_{i}|\mathcal{F}_{i-1}]\}_{i=1}^n$ is a martingale-difference sequence, by Chatterji's inequality,
\begin{equation} \label{ee3}
  \begin{split}
  E_{P}\left[|\tilde{S}_{n}-{\tilde{S}_{n}^{P}}|^{1+\alpha}\right]&\leq 2^{1-\alpha}\sum_{i=1}^nE_P\left[\left|\tilde{X}_i-E_P[\tilde{X}_i|\mathcal{F}_{i-1}]\right|^{1+\alpha}\right]   \\
  &\leq2^{1-\alpha}\sum_{i=1}^n2^\alpha\left(E_P\left[|\tilde{X}_i|^{1+\alpha}\right]+E_P\left[|E_P[\tilde{X}_i|\mathcal{F}_{i-1}]|^{1+\alpha}\right]\right)\leq 4nC_\alpha,
  \end{split}
\end{equation}

Thus, by Theorem \ref{new-th1}, we obtain
\[
\mathbb{\hat{E}}\left[  \varphi \left(  \frac{S_{n}}{n}\right)  \right]
-\max_{r\in \lbrack \underline{\mu},\bar{\mu}]}\varphi(r)=\max_{P\in \mathcal{P}%
}E_{P}\left[\varphi\left(\frac{\tilde{S}_{n}}{n}\right)\right]-\max_{r\in \lbrack \underline{\mu}%
,\bar{\mu}]}\varphi(r)\leq L_\vp\left(\frac{4C_\alpha}{n^\alpha}\right)^{\frac{1}{1+\alpha}}.
\]

On the other hand, there exists $\mu^{\ast}\in \lbrack \underline{\mu},\bar{\mu}]$ such that
$\varphi(\mu^{\ast})=\max_{r\in \lbrack \underline{\mu},\bar{\mu}]}\varphi(r)$ for fixed $\varphi \in C_{Lip}(\mathbb{R})$.
By Theorem \ref{new-th1},
we can find
$P_{i,1}, P_{i,2}\in\mathcal{P}$ such that%
$E_{P_{i,1}}[\tilde{X}_{i}]=\bar{\mu}\text{ and }E_{P_{i,2}}[\tilde{X}%
_{i}]=\underline{\mu}.
$
For $i\leq n$, if $\bar{\mu}=\underline{\mu}$, we define $P_{i}=P_{i,1}$.
Otherwise, we define
\[
P_{i}=\frac{\mu^{\ast}-\underline{\mu}}{\bar{\mu}-\underline{\mu}}%
P_{i,1}+\frac{\bar{\mu}-\mu^{\ast}}{\bar{\mu}-\underline{\mu}}P_{i,2}.
\]
One can check that $P_{i}\in \mathcal{P}$ and $E_{P_{i}}[\tilde{X}_{i}%
]=\mu^{\ast}$ for $i\leq n$, and $\tilde{X}_{1},\ldots,\tilde{X}_{n}$ are
independent under $P^{\ast}$, where $P^*$ is defined by
\[
P^{\ast}=\bigotimes \limits_{i=1}^{n}P_{i}|_{\sigma(\tilde{X}_{i})}.
\]
We can verify that $P^{\ast}\in \mathcal{P}$, $E_{P^{\ast}}[\tilde{X}_{i}%
]=E_{P^*}[\tilde{X}_i|\mathcal{F}_{i-1}]=\mu^{\ast}$.  Similar to the above proof, we have
\begin{align*}
\mathbb{\hat{E}}\left[  \varphi \left(  \frac{S_{n}}{n}\right)  \right]
-\max_{r\in \lbrack \underline{\mu},\bar{\mu}]}\varphi(r)  &  \geq E_{P^{\ast}%
}\left[  \varphi \left(  \frac{1}{n}\sum_{i=1}^{n}\tilde{X}_{i}\right)
\right]  -\varphi(\mu^{\ast})\geq-L_\vp\left(\frac{4C_\alpha}{n^\alpha}\right)^{\frac{1}{1+\alpha}}
\end{align*}
The proof is completed.
\end{proof}

In particular, if $\alpha=1$, we can give a more precise estimation.  Noting that
\begin{align}\label{ee2}
E_P[(\tilde{X_i}-E_P[\tilde{X_i}|\mathcal{F}_{i-1}])^2]&\leq\inf_{\mu\in[\lu,\ou]}E_P[(\tilde{X}_i-\mu)^2]\leq \inf_{\mu\in[\lu,\ou]}\be[({X}_i-\mu)^2],
\end{align}
we immediately have the following corollary, which generalizes the result in Song \cite{song1}.
\begin{coro}\label{c1}
Let $\{X_i\}_{i=1}^\infty$ be an i.i.d. sequence on sublinear expectation space $(\Omega,\mathcal{H},\be)$ with $\ou=\be[X_1]$, $\lu=-\be[-X_1]$ and $\be[|X_1|^2]<\infty$, Then,
\[
\sup_{|\vp|_{Lip}\leq 1}\left \vert \mathbb{\hat{E}}\left[  \varphi \left(  \frac{S_{n}}{n}\right)
\right]  -\max_{r\in \lbrack \underline{\mu},\bar{\mu}]}\varphi(r)\right \vert
\leq \frac{\os(X_1)}{\sqrt{n}},
\]
where $\os^2(X_1)=\inf_{\mu\in[\lu,\ou]}\be[|X_1-\mu|^2]$ is called the upper variance in Walley \cite{walley}.
\end{coro}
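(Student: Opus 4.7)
The plan is to specialize the proof of Theorem \ref{new-th3} to $\alpha=1$, using two simplifications that the $L^2$ setting affords and that together replace $(4C_\alpha/n^\alpha)^{1/(1+\alpha)}$ by the sharper $\os(X_1)/\sqrt{n}$. Let $\cp$, $\tilde{X}_i$, $\cf_i$ be as in (\ref{e1}) and (\ref{e4}), fix $P\in\cp$, and set $\tilde{S}_n=\sum_{i=1}^n\tilde{X}_i$ and $\tilde{S}_n^P=\sum_{i=1}^n E_P[\tilde{X}_i|\cf_{i-1}]$. By Proposition \ref{pr2}, $\tilde{S}_n^P/n\in[\lu,\ou]$ $P$-a.s.; combining this with $|\vp|_{Lip}\leq 1$ and the Cauchy--Schwarz inequality yields
\[
E_P\left[\vp\left(\frac{\tilde{S}_n}{n}\right)\right]-\max_{r\in[\lu,\ou]}\vp(r)\leq \frac{1}{n}\left(E_P\left[|\tilde{S}_n-\tilde{S}_n^P|^2\right]\right)^{1/2}.
\]

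For the $L^2$ norm on the right I would exploit the fact that $\{\tilde{X}_i-E_P[\tilde{X}_i|\cf_{i-1}]\}_{i=1}^n$ is a martingale-difference sequence under $P$, so its terms are pairwise orthogonal in $L^2(P)$, giving the exact identity $E_P[|\tilde{S}_n-\tilde{S}_n^P|^2]=\sum_{i=1}^n E_P[(\tilde{X}_i-E_P[\tilde{X}_i|\cf_{i-1}])^2]$. Then (\ref{ee2}) bounds each summand by $\inf_{\mu\in[\lu,\ou]}\be[(X_i-\mu)^2]=\os^2(X_1)$, where identical distribution absorbs the subscript; hence the total is at most $n\os^2(X_1)$, and the previous display becomes $E_P[\vp(\tilde{S}_n/n)]-\max_{r\in[\lu,\ou]}\vp(r)\leq \os(X_1)/\sqrt{n}$. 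Taking the supremum over $P\in\cp$ and invoking Theorem \ref{new-th1} (applicable to $\vp\in C_{Lip}(\br)$ thanks to $\be[|X_1|^2]<\infty$) gives the upper half of the corollary.

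For the matching lower bound, I would reuse the product construction from the end of the proof of Theorem \ref{new-th3}: choose $\mu^\ast\in[\lu,\ou]$ attaining $\max_{r\in[\lu,\ou]}\vp(r)$, pick $P_{i,1},P_{i,2}\in\cp$ with $E_{P_{i,1}}[\tilde{X}_i]=\ou$ and $E_{P_{i,2}}[\tilde{X}_i]=\lu$, form the convex combination $P_i$ so that $E_{P_i}[\tilde{X}_i]=\mu^\ast$, and set $P^\ast=\bigotimes_{i=1}^n P_i|_{\sigma(\tilde{X}_i)}\in\cp$. Under $P^\ast$ the $\tilde{X}_i$ are independent with common mean $\mu^\ast$, so the same $L^2$-orthogonality argument applied to $\tilde{S}_n-n\mu^\ast$, together with (\ref{ee2}), provides $E_{P^\ast}[\vp(\tilde{S}_n/n)]-\vp(\mu^\ast)\geq -\os(X_1)/\sqrt{n}$, closing the absolute value.

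No step here is genuinely hard: the essential improvement over the general estimate (\ref{ee3}) is that orthogonality of $L^2$ martingale differences supplies the sharp constant $1$ in front of the variance sum, rather than the factor $4$ produced by Chatterji's inequality, while (\ref{ee2}) refines each individual term from $\be[X_i^2]$ to the upper variance $\os^2(X_1)$. The only mild subtlety is justifying that $E_P[(\tilde{X}_i-\mu)^2]\leq \be[(X_i-\mu)^2]$ for constants $\mu\in[\lu,\ou]$, which uses the representation (\ref{e2}) for $C_{Lip}$ test functions, legitimate here because of the $L^2$ moment assumption on $X_1$.
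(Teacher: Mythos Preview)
Your proposal is correct and mirrors the paper's own argument: the paper derives the corollary from Theorem~\ref{new-th3} by specializing to $\alpha=1$ and replacing the crude bound in (\ref{ee3}) with the sharper variance estimate (\ref{ee2}), exactly as you do via $L^2$-orthogonality of martingale differences together with the projection inequality. One cosmetic remark: the factor $4$ you attribute to Chatterji's inequality actually arises from the second line of (\ref{ee3}) (the crude $|a-b|^{1+\alpha}\le 2^\alpha(|a|^{1+\alpha}+|b|^{1+\alpha})$ step), since for $\alpha=1$ Chatterji's bound is already the exact orthogonality identity; this does not affect your proof.
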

%
%
\begin{rem} Under assumptions in Corollary \ref{c1} and define the upper variance as $\os^2:=\sup_{P\in\cp}E_P[(\tilde{X_1}-E_P[\tilde{X_1}])^2]$,
Fang et al. \cite{FPSS} obtained the following LLN with rate of convergence:
\begin{equation}
\label{eee1}\be\left[d_{[\lu,\ou]}^2\left(\frac{S_n}{n}\right)\right]\leq\frac{2[\os^2+(\ou-\lu)^2]}{n},
\end{equation}
where $d_{[\lu,\ou]}(x)=\inf_{y\in[\lu,\ou]}|x-y|$. For each $P\in\cp$, by (\ref{ee3}) and (\ref{ee2}),
$$E_P\left[\left|d_{[\lu,\ou]}\left(\frac{S_n}{n}\right)\right|^2\right]\leq \frac{1}{n^2}E_{P}\left[\left|\tilde{S}_{n}-{\tilde{S}_{n}^{P}}\right|^{2}\right]\leq\frac{\os^2}{n}.$$
Thus (\ref{eee1}) can be improved by
$$\be\left[d_{[\lu,\ou]}^2(\frac{S_n}{n})\right]\leq\frac{\os^2}{n}.$$
\end{rem}



\end{document}